\documentclass[10pt]{article}


\usepackage{fullpage}
\usepackage[dvipsnames]{xcolor}  
\usepackage{tipa}
\usepackage{dsfont}
\usepackage{tikz-cd}
\usepackage{tikz, tikz-3dplot, pgfplots}
\usetikzlibrary{decorations.markings, patterns, decorations.pathmorphing}

\tikzset{->-/.style={decoration={
markings,
mark=at position #1 with {\arrow{>}}},postaction={decorate}}}

\usepackage{graphicx}

\usepackage{amsxtra}
\usepackage{amsmath}
\usepackage{amssymb}
\usepackage{amsfonts}
\usepackage{mathrsfs}
\usepackage{amsthm}
\usepackage[all]{xy}
\usepackage{enumitem}
\usepackage{hyperref}
\usepackage{subfigure}
\usepackage{mathabx}
\usepackage{euscript}
\usepackage{xargs}                      

\usepackage[colorinlistoftodos,prependcaption,textsize=tiny]{todonotes}
\newcommandx{\note}[2][1=]{\todo[linecolor=Plum,backgroundcolor=Plum!25,bordercolor=Plum,#1]{#2}}

\newtheoremstyle{example}{\topsep}{\topsep}%
     {}
     {}
     {\bfseries}
     {.}
     {2pt}
     {\thmname{#1}\thmnumber{ #2}\thmnote{ #3}}

\theoremstyle{example}
\newtheorem{exa}[equation]{Example}

\newtheorem{rem}[equation]{Remark}

\newtheorem{defi}[equation]{Definition}

\newtheorem{thm}[equation]{Theorem}
\newtheorem{introthm}[section]{Theorem}
\newtheorem{cor}[equation]{Corollary}
\newtheorem{lem}[equation]{Lemma}
\newtheorem{prop}[equation]{Proposition}

\hypersetup{%
    colorlinks,%
    linkcolor={red!50!black},%
    citecolor={blue!50!black},%
    urlcolor={blue!80!black}%
}

\setcounter{tocdepth}{3}



\setlist[enumerate,1]{label=(\arabic{*})}
\setlist[enumerate,2]{label=(\roman{*})}
\setlist[enumerate,3]{label=(\alph{*})}




\def\B{ {\EuScript B}}

 
\def\C{{\EuScript C}}





\def\F{{ \EuScript F}}

\def\Fun{\operatorname{Fun}}

\def\Fin{\operatorname{Fin}}
\def\fin{\operatorname{fin}}



\def\Hom{\operatorname{Hom}}

\def\Hyp{\operatorname{Hyp}}
 

 \def\im{\on{im}}







\def\O{{\mathcal O}}

\def\on{\operatorname}

\def\op{{\operatorname{op}}}


 \def\P{{\EuScript P}}

\def\phi{{\varphi}}




 \def\Set{ {\operatorname{Set}}}

\def\Sh{\on{Sh}}



\def\U{ {\EuScript U}}






 \def\={{\,\, \simeq\,\, }}
 \def\-{{\setminus}}
\def\<<{\langle {}\hskip -.1cm {}\langle}
\def\>>{\rangle \hskip -.1cm \rangle}

\title{Hypersheaves and bases}
\author{Tobias Dyckerhoff, Till Heine, Simon Schneider}

\begin{document}

\maketitle

\begin{abstract}
    Let $X$ be a topological space equipped with a basis. We prove that, for
    every $\infty$-category $\C$ with limits, the restriction functor from
    $\C$-valued hypersheaves on $X$ to basic hypersheaves is an equivalence of
    $\infty$-categories. 
\end{abstract}

\tableofcontents

\addcontentsline{toc}{section}{Introduction}

\numberwithin{equation}{section}

\newpage
\section*{Introduction}

The Zariski topology on an affine scheme $\on{Spec}(R)$ has a basis given by
the collection of affine opens $U_f = \on{Spec}(R_f)$ where $f \in R$. An
elegant way to construct the structure sheaf of $\on{Spec}(R)$ is to first
specify its values $U_f \mapsto R_f$ on this basis and then Kan extend to all
opens. 
This hinges upon the general fact that, given any category $\C$ with limits and
a topological space $X$ with basis $\B$, closed under finite intersections, the
category of $\C$-valued sheaves on $X$ is, via restriction, equivalent to the
category of $\C$-valued sheaves on $\B$. This statement remains true literally
when passing to the homotopy coherent context (i.e. replacing $\C$ by an
$\infty$-category), and can be utilized to construct the structure sheaf of an
affine spectral scheme in complete analogy to the above construction for affine
schemes (cf. \cite[1.1.4]{lurie:sag}). 

The requirement that the basis $\B$ be intersection closed is rather restrictive
and not satisfied in many situations of interest. For example, it fails for the
basis of disks in a topological manifold. In ordinary category theory,
this can be fixed by a minor modification of the concept of a sheaf on the
basis $\B$: For every basic covering $\{B_i\}$ of a basic open $B$, along with
basic coverings $\{B_{ijk}\}$ of each intersection $B_i \cap B_j$, we require
that sections over $B$ correspond uniquely to families of sections over
$\{B_i\}$ that agree, for each intersection $B_i \cap B_j$, locally on
$\{B_{ijk}\}$. In other words, the diagram 
\begin{equation}\label{eq:set_sheaf_basis_condition}
\F(B)  \to \prod \F(B_i) \rightrightarrows \prod \F(B_{ijk} )
\end{equation}
is an equalizer cone. Using this notion of a sheaf on $\B$, for an ordinary category
$\C$, the desired equivalence between $\C$-valued sheaves on $X$ and sheaves on
$\B$ remains intact. 

However, when moving to the homotopy coherent context, things become more
subtle. Namely, the truncated descent condition
\eqref{eq:set_sheaf_basis_condition} should be replaced by a simplicial
analogue also encoding basic coverings of $n$-fold intersections, $n > 2$. This
leads to the classical concept of a {\em hypercovering} (cf. \cite{verdier:SGA4V})
which may be formulated as follows: A basic hypercovering consists of a simplicial
set $K$ along with a functor $\U:\Delta/K \to \B^{\op}$ such that, for every
$\tau:\partial \Delta^n \to K$, we have 
\[
    \bigcup_{\substack{\sigma \in K_n\\\sigma|\partial\Delta^n=\tau}}
    \U(\sigma)=\bigcap_{0 \le i \le n} \U(\partial_i
\tau).
\]
The corresponding {\em hyperdescent} condition then amounts to 
\[
    \F(B) \simeq \lim_{\Delta/K} \F \circ \U
\]
and we refer to a functor $\F: \B^{\op} \to \C$ satisfying it as a $\C$-valued
{\em hypersheaf} on $\B$. We obtain an analogous hyperdescent condition for
presheaves on $X$, simply replacing $\B$ by the poset of {\em all} opens in the above
definitions. 
In this context, the main result proven in this note is the following:

\begin{introthm}
    \label{thm:main}
    Let $\C$ be an $\infty$-category with limits, $X$ a topological space, and
    $\B$ a basis for the topology of $X$. Then the $\infty$-category of
    $\C$-valued hypersheaves on $X$ is equivalent, via restriction, to the
    $\infty$-category of $\C$-valued hypersheaves on $\B$. 
\end{introthm}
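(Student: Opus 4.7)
My plan is to exhibit a quasi-inverse to the restriction functor $r: \on{HyperSh}(X, \C) \to \on{HyperSh}(\B, \C)$ via right Kan extension along the inclusion $\iota: \B \hookrightarrow \on{Op}(X)$, and then verify that the resulting unit and counit are equivalences on hypersheaves. First, $r$ is well-defined: any basic hypercovering $\U: \Delta/K \to \B^{\op}$ of $B \in \B$ is, after postcomposition with the inclusion $\B^{\op} \hookrightarrow \on{Op}(X)^{\op}$, still a hypercovering in the analogous sense on $X$ (the combinatorial condition $\bigcup_\sigma \U(\sigma) = \bigcap_i \U(\partial_i \tau)$ is unchanged), so any hypersheaf on $X$ restricts to a basic hypersheaf.

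Write $e$ for the right Kan extension along $\iota^{\op}$ at the presheaf level, explicitly $e\F(U) = \lim_{B \in \B,\, B \subseteq U} \F(B)$. Since $\iota^{\op}$ is fully faithful, the counit $re\F \to \F$ of the adjunction $r \dashv e$ is an equivalence for every presheaf $\F$ on $\B$ (a standard property of Kan extension along fully faithful functors, since the indexing diagram for $e\F(B)$ has $B$ itself as a terminal object when $B \in \B$). It therefore suffices to establish two nontrivial statements: that $e$ sends basic hypersheaves to hypersheaves on $X$, and that for every hypersheaf $\G$ on $X$ and every open $U$, the unit map $\G(U) \to \lim_{B \subseteq U,\, B \in \B} \G(B)$ is an equivalence. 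Together with the counit equivalence these yield an adjoint equivalence between $\on{HyperSh}(X, \C)$ and $\on{HyperSh}(\B, \C)$.

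Both remaining claims reduce to a single combinatorial step: every open $U \subseteq X$ admits a basic hypercovering $K_U$ in the sense of the introduction, constructed inductively by covering $U$ with basic opens at level zero and then, at each higher level $n$, choosing basic-open covers of the intersections $\bigcap_i \U_U(\partial_i \tau)$ on the boundaries (possible precisely because $\B$ is a basis of $X$). For the unit statement, hyperdescent of $\G$ along $K_U$ identifies $\G(U) \simeq \lim_{\Delta/K_U} (r\G) \circ \U_U$, and a cofinality comparison between $\Delta/K_U$ and $\B/U$ shows this equals $\lim_{B \in \B/U} (r\G)(B)$. For preservation of hypersheaves, given any hypercovering $\V: \Delta/L \to \on{Op}(X)^{\op}$ of $U$, refine each $\V(\sigma)$ to a basic hypercovering and glue into a global basic hypercovering of $U$; Fubini for $\infty$-categorical limits then identifies $\lim_{\Delta/L} e\F \circ \V$ with $e\F(U)$.

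The main obstacle is the combinatorial/simplicial step of assembling refined hypercoverings and establishing the cofinality-type comparison between the Kan-extension limit over $\B/U$ and the hyperdescent limit over $\Delta/K_U$. The Fubini interchange itself is a formal consequence of limit theory, but constructing a coherent global basic hypercovering out of local pieces, and verifying that the resulting limits of a basic hypersheaf are insensitive to the choice of hypercovering, requires careful bookkeeping with the simplicial structure on the category of simplices. Once this refinement-and-comparison lemma is in place, the equivalence of $\infty$-categories follows formally from the adjunction $r \dashv e$.
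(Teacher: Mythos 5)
Your overall architecture coincides with the paper's: both proofs reduce the theorem to showing that a functor $\F:\O(X)^{\op}\to\C$ restricting to a basic hypersheaf is a hypersheaf on $X$ if and only if it is a right Kan extension of $\F|\B^{\op}$, with the counit of $r\dashv e$ handled formally by full faithfulness. However, the two steps you defer as ``careful bookkeeping'' are the entire content of the theorem, and as sketched they would not go through. For the unit statement, an \emph{inductively chosen} basic hypercovering $K_U$ will not in general make $\Delta/K_U\to(\B/U)^{\op}$ coinitial: the slice category over a basic open $B\subset U$ consists of those simplices whose assigned open contains $B$, and for a sparse choice of covers this can easily be empty or disconnected. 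The paper circumvents this by refining the \emph{trivial} hypercovering of $U$ to the \emph{maximal} basic refinement $K^{\B}$, whose $n$-simplices carry an entire compatible system $O:\P([n])\setminus\{\emptyset\}\to\B^{\op}$ of basic opens; maximality is exactly what lets every sphere $\partial\Delta^n\to K^{\B}_{B\subset}$ be filled (send $\id_{\Delta^n}$ to $B$), so the slices are trivial Kan complexes and Quillen's Theorem A applies. You need to build this maximality into your construction rather than make choices level by level.

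The more serious gap is in the preservation claim. Your ``Fubini'' interchange is not formal, because the fiber of $\Delta/K^{\B}\to\Delta/K$ over a simplex $\sigma$ is a \emph{discrete set} of refinements, so the Kan-extension/transitivity formula would produce a product over that set rather than $\F(\U(\sigma))$ --- the descent information linking different refinements of the same simplex is simply not present in the discrete indexing category. The paper's fix is a categorification step: it replaces $K^{\B}_n$ by the poset of refinements ordered by reverse inclusion, obtaining a simplicial poset $\tilde K^{\B}$ whose fibers over $\sigma$ map coinitially to $(\B/\U(\sigma))^{\op}$ (they have cofinal constant-system objects), so that the right-Kan-extension transitivity argument applies. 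But then one must prove that the discrete and categorified refinements compute the \emph{same} limit, and this is where the real work lies: the comparison $\Delta/K^{\B}\to\Delta/\tilde K^{\B}$ is not directly amenable to Theorem A, and the paper routes it through the symmetrization $S(K)$ over $\Fin$, proving coinitiality of $\Delta/K\to\Fin/S(K)$ by an inductive van Kampen argument and then a separate coinitiality lemma for $\Fin/S(K)^{\B}\to\Fin/S(\tilde K)^{\B}$ using that the relevant comparison object is a trivial Kan complex. None of this machinery --- the maximal refinement, the categorification, the symmetrization --- appears in your outline, and without some substitute for it the refinement-and-comparison lemma you postulate cannot be established.
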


Our proof is a typical ``lossless data transfer by means of Kan extension''
argument and is therefore based on rather explicit cofinality arguments.
Two essential techniques are {\em symmetrization}, allowing us to replace the
simplex category $\Delta$ by the category of standard finite sets $\Fin$, and
{\em categorification}, allowing us to replace sets by categories. 

Theorem \ref{thm:main} is used in an ongoing project on constructible sheaves
-- we were not able to find a reference on the desired level of generality.
However, as Markus Zetto pointed out to us after the completion of our proof,
in \cite{aoki:ttg}, the special case where $\C$ is the $\infty$-category of
spaces is proven using topos-theoretic methods and the possibility of a purely
combinatorial proof is also mentioned and attributed to Jacob Lurie. 

For sheaves (instead of hypersheaves), there does not seem to be a general
analogue of the Theorem. There is a version of the sheaf condition that does
not require intersection stability (see Definition \ref{def:sheaf_basis}), but
even with this notion, sheaves will not generally coincide with sheaves on a
basis (e.g. the Hilbert cube, cf. \cite[3.12]{bgh:exodromy}). Nevertheless, in
various situations of interest, sheaves and hypersheaves agree (cf.
\cite[7.2.1.12]{lurie:htt}).

\subsection*{Acknowledgements}
We would like to thank Markus Zetto for helpful discussions.

\subsection*{Funding}
The authors acknowledge support from the Deutsche
Forschungsgemeinschaft (DFG, German Research Foundation) under Germany's
Excellence Strategy - EXC 2121 ``Quantum Universe'' - 390833306 and the
Collaborative Research Center - SFB 1624 ``Higher structures, moduli spaces and
integrability'' - 506632645.

\setcounter{section}{0}

\section{Conventions}%
\label{sec:conventions}

Our standard reference for homotopy coherent mathematics is \cite{lurie:htt}. In
particular, by an {\em $\infty$-category} we mean a simplicial set satisfying the weak
inner horn filling condition. Ordinary categories will be implicitly identified
with their nerves.
 
A functor $f: A \to B$ of (small) $\infty$-categories is called {\em cofinal} if, for every object $b \in B$, the geometric realization of the slice category
\[
    b/f = A \times_{B} B_{b/}
\]
under $b$ is contractible. In particular, this means that an object $x \in B$
is a final object if and only if the inclusion $\{x\} \to B$ is cofinal.
Cofinal functors preserve colimits. 

Dually, $f$ is called {\em coinitial} if, for every object $b \in B$, the
geometric realization of the slice category
\[
    f/b = A \times_{B} B_{/b}
\]
over $b$ is contractible. Note that an object $x \in B$ is an initial object if and only
if the inclusion $\{x\} \to B$ is coinitial. Coinitial functors preserve limits. 

Throughout, we fix a topological space $X$, and denote by $\B$ a basis of its topology.

\section{Sheaves on intersection stable bases}
\label{sec:sheaves_basis_intersection}

In this section, we review that, if $\B$ is intersection-stable, the notion of
sheaves on a basis does agree with that of sheaves on the space. The arguments
also serve as a rough blueprint for the more technical proof of the theorem for
hypersheaves, and are largely the same as the ones that appear in
\cite[Proposition 1.1.4.4]{lurie:sag}.
Let $X$ be a (small) topological space and denote by $\O(X)$ the poset of its
open subsets. Let $\C$ be an $\infty$-category with (small) limits.

\begin{defi}
    \label{def:sheaf_basis}
    Let $\B \subset \O(X)$ be a basis of the topology. A $\C$-valued presheaf
    \[
    \F:\B^{\op} \to \C
    \]
    is a sheaf on $\B$ if, for every $B \in \B$ and every open cover
    $\{B_i\}_{i \in I}$ of $B$ by elements of $\B$, restriction induces an
    equivalence
    \[
	\F(B) \simeq \lim_{U \in (\B/\{B_i\})^{\op}} \F(U),
    \]
    where $\B / \{B_i\} \subset \B$ is the poset spanned by sets $U \in \B$
    such that $U \subset B_i$ for some $i \in I$.
    We denote by
    \[
    \Sh(\B,\C) \subset \Fun(\B^{\op},\C)
    \]
    the $\infty$-category of $\C$-valued sheaves on $\B$. For $\B=\O(X)$ we also
    write $\Sh(X,\C)$ and refer to these as sheaves on $X$.
\end{defi}

\begin{lem}\label{lem:cof}Let $U \subset X$ be an open set and $\{V_i\}_{i \in I}$ an open cover of $U$. For $S=\{i_1, \dots, i_n\} \subset I$, we denote $V_S=V_{i_1} \cap V_{i_2} \dots \cap V_{i_n}$. The inclusion of posets
\[
    \P_{\fin}(\{V_i\})=\{V_S | S \subset I \text{ finite} \} \subset \O(X)/\{V_i\}
\]
is cofinal.
\end{lem}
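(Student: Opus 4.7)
The plan is to unpack the definition of cofinality from Section~\ref{sec:conventions}. Fix $U \in \O(X)/\{V_i\}$; I must show that the slice
\[
    Q_U \;:=\; \P_{\fin}(\{V_i\}) \times_{\O(X)/\{V_i\}} \bigl(\O(X)/\{V_i\}\bigr)_{U/}
\]
has contractible geometric realization. Since the inclusion in question is an inclusion of sub-posets of $\O(X)$, both ordered by $\subset$, this slice is identified with the subposet
\[
    Q_U \;=\; \{\, V_S \ \mid\ S \subset I \text{ finite},\ U \subset V_S \,\} \subset \P_{\fin}(\{V_i\}),
\]
again ordered by $\subset$.

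The main step is then to observe that $Q_U$ is a nonempty cofiltered poset. Nonemptiness is immediate from the defining condition of $\O(X)/\{V_i\}$: since $U \subset V_i$ for some $i \in I$, the element $V_{\{i\}} = V_i$ lies in $Q_U$. For cofilteredness, given $V_S, V_{S'} \in Q_U$, note that $V_S \cap V_{S'} = V_{S \cup S'}$ still belongs to $\P_{\fin}(\{V_i\})$ (as $S \cup S'$ is finite) and still contains $U$ (since $U \subset V_S$ and $U \subset V_{S'}$). Hence $V_{S \cup S'} \in Q_U$ is a common lower bound of $V_S$ and $V_{S'}$ under $\subset$.

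Finally, a nonempty cofiltered poset has contractible classifying space --- equivalently, its opposite is a nonempty filtered poset, whose nerve is well known to be weakly contractible, and passing to the opposite preserves geometric realization up to homotopy equivalence. This gives $|Q_U|\simeq \pt$, proving that the inclusion is cofinal. I do not anticipate any real obstacle: the only subtlety is keeping straight the direction of the slice against the inclusion order on $\O(X)$; once $Q_U$ has been identified correctly, the content of the lemma reduces to the trivial observation that the finite intersections of the $V_i$ which contain $U$ are themselves closed under pairwise intersection.
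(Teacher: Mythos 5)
Your argument is correct and is essentially the paper's own proof, just spelled out in more detail: the paper likewise disposes of the lemma by noting that each slice $V/\P_{\fin}(\{V_i\})$ is (co)filtered --- via closure under finite intersections $V_S \cap V_{S'} = V_{S\cup S'}$ --- and hence weakly contractible. Your identification of the slice and the nonemptiness check are the same computation, so there is nothing to add.
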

\begin{proof}
    For any $V \in \O(X)/\{V_i\}$, the category $V/\P_{\fin}(\{V_i\})$ is filtered, hence weakly contractible.
\end{proof}
\begin{lem}
Let $X$ be a topological space and let $\B$ be a basis of the topology that is stable under intersections. Then the restriction map
\[
\Sh(X,\C)\to \Sh(\B,\C)
\]
is well-defined and an equivalence of $\infty$-categories.
\end{lem}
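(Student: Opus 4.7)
The plan is to exhibit the inverse of restriction as the pointwise right Kan extension
\[
    \R: \Fun(\B^{\op},\C) \lra \Fun(\O(X)^{\op},\C), \qquad \R\F(U) = \lim_{B \in (\B_{/U})^{\op}} \F(B),
\]
where $\B_{/U} = \{B \in \B : B \subset U\}$, and to verify that the unit/counit of the restriction--Kan extension adjunction are equivalences when both sides are restricted to sheaves.

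First I would check that restriction is well-defined, which amounts to noting that if $\{B_i\}$ is a basic cover of some $B \in \B$ and $\G \in \Sh(X,\C)$, then the sheaf condition $\G(B) \simeq \lim_{(\O(X)/\{B_i\})^{\op}} \G$ restricts to $\G(B) \simeq \lim_{(\B/\{B_i\})^{\op}} \G$ via the cofinality argument that for each $V \in \O(X)/\{B_i\}$, the slice $\{B' \in \B : B' \subset V\}$ is non-empty (as $\B$ is a basis) and closed under finite intersections (as $\B$ is intersection stable), hence cofiltered and weakly contractible. Second, for $\F$ a sheaf on $\B$ and $B \in \B$, the object $B$ is terminal in $\B_{/B}$, so $\R\F(B) \simeq \F(B)$; this shows the unit is an equivalence.

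The main work is to show that $\R\F$ is a sheaf on $X$ whenever $\F$ is a sheaf on $\B$. Given an open cover $\{V_i\}$ of $U \in \O(X)$, I would unfold
\[
    \lim_{V \in (\O(X)/\{V_i\})^{\op}} \R\F(V) \;\simeq\; \lim_{(V,B) : B \subset V \in \O(X)/\{V_i\}, B \in \B} \F(B)
\]
by Fubini, and then identify the right-hand side with $\lim_{B \in (\B/\{V_i\})^{\op}} \F(B)$: for each fixed $B \in \B/\{V_i\}$, the fiber $\{V \in \O(X)/\{V_i\} : B \subset V\}$ is non-empty (take $V = V_i$ with $B \subset V_i$) and closed under finite intersections (using $B \subset V_1 \cap V_2 \subset V_{i_1}$), hence weakly contractible. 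Finally, a second cofinality argument, entirely parallel to the one in step one and again leveraging intersection stability of $\B$, identifies $\lim_{(\B/\{V_i\})^{\op}} \F$ with $\lim_{(\B_{/U})^{\op}} \F = \R\F(U)$, using that for each $B' \in \B_{/U}$ the set $\{B \in \B : B \subset B', B \subset V_i \text{ for some } i\}$ is non-empty (basis property applied to the cover $B' = \bigcup (B' \cap V_i)$) and cofiltered.

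The counit $\R(\G|_\B) \to \G$ at $U \in \O(X)$ is handled by the same machinery: the basis gives a cover $\B_{/U}$ of $U$, the sheaf condition on $\G$ yields $\G(U) \simeq \lim_{(\O(X)/\B_{/U})^{\op}} \G$, and the analogous cofinality argument reduces this to $\lim_{(\B_{/U})^{\op}} \G = \R(\G|_\B)(U)$. The expected main obstacle is bookkeeping rather than a conceptual difficulty: one has to be careful to distinguish the various slice posets $\B_{/U}$, $\B/\{V_i\}$, and $\O(X)/\{V_i\}$ and to apply Lemma \ref{lem:cof} (or its direct analogue) in the right direction each time, always checking non-emptiness and closure under finite intersections to conclude weak contractibility.
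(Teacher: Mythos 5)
Your overall strategy (restriction $\dashv$ right Kan extension, then checking the unit and counit on sheaves) matches the paper's, which characterizes sheaves on $X$ as right Kan extensions of sheaves on $\B$. But there is a genuine gap at the crucial step, namely the claim that the right Kan extension of a $\B$-sheaf satisfies descent on $X$. Your chain of identifications reduces this to $\lim_{(\B/\{V_i\})^{\op}} \F \simeq \lim_{(\B/U)^{\op}} \F$, which you propose to obtain from a cofinality argument with slices $\{B \in \B : B \subset B',\ B \subset V_i \text{ for some } i\}$. This cannot work. First, for a limit comparison along the inclusion $\B/\{V_i\} \subset \B/U$ the relevant comma categories are $\{B \in \B/\{V_i\} : B' \subset B\}$ (basic opens \emph{containing} $B'$), and these are typically empty (take $B'$ not contained in any single $V_i$), so the inclusion is simply not cofinal. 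Second, the posets you actually test are not weakly contractible in general either: for a covering of a circle by arcs, the poset of basic opens subordinate to the cover has the weak homotopy type of the circle. The tell-tale sign is that your argument for this step never invokes the hypothesis that $\F$ is a sheaf on $\B$; were it correct, it would show that the right Kan extension of an \emph{arbitrary} presheaf on $\B$ is a sheaf on $X$, which is false. The identification $\lim_{(\B/\{V_i\})^{\op}} \F \simeq \lim_{(\B/U)^{\op}} \F$ is a descent statement, not a cofinality statement: the paper proves it by showing that $\F|(\B/U)^{\op}$ is a right Kan extension of its restriction to $(\B/\{V_i\})^{\op}$ --- the pointwise formula at each $B' \in \B/U$ being exactly the sheaf condition on $\B$ for the induced basic cover $\{B \in \B : B \subset B' ,\ B \subset V_i \text{ for some } i\}$ of $B'$ --- and then invoking transitivity of Kan extensions.

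A smaller but recurring issue: in your cofinality checks you consistently test contractibility of the posets of basic opens \emph{contained in} a given open, whereas cofinality of an inclusion of subposets of $\O(X)$ (which is what restricts a limit over the opposite poset) is tested on the posets of elements \emph{containing} the given open. In the well-definedness and counit steps this is only a direction slip --- the correct slices $\{B \in \B/\{B_i\} : V \subset B\}$ are non-empty (they contain some $B_i$) and cofiltered by intersection stability, so those steps survive, in parallel with the paper's use of Lemma \ref{lem:cof} --- but it is the same confusion that makes the main step appear to be a formality when it is precisely where the descent hypothesis on $\B$ must enter.
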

\begin{proof}
We first prove well-definedness. Let $\F \in \Sh(X,\C)$ be a sheaf. Given a subset $B \in \B$ and a cover $\{B_i\}_{i \in I}$ of $B$ by open sets belonging to $\B$, we must show that restriction induces an equivalence
\[
    \F(B) \simeq \lim_{V \in (\B/\{B_i\})^{\op}}\F(V).
\]
It follows from the sheaf condition on $X$ that restriction induces an equivalence
\[
    \F(B) \simeq \lim_{V \in (\O(X)/\{B_i\})^{\op}}\F(V).
\]
By Lemma \ref{lem:cof} and $\cap$-stability of $\B$, these conditions are equivalent since $\P_{\fin}(\{B_i\})$ is a cofinal subcategory of both $\B/\{B_i\}$ and $\O(X)/\{B_i\}$.

We now show that the restriction functor is an equivalence by showing that sheaves on $X$ are precisely the right Kan extensions of sheaves on $\B$.
Suppose first that $\F:\O(X)^{\op} \to \C$ is a sheaf on $X$. By the pointwise formula, we must show that, for $U \in \O(X)$, the restriction
\[
    \F(U) \to \lim_{V \in (\B/U)^{\op}} \F(V)
\]
is an equivalence. By regarding $\B/U$ as an open cover of $U$, we see that $\F$ satisfies
\[
    \F(U) \simeq \lim_{V \in (\O(X)/(\B/U))^{\op}} \F(V).
\]
This condition is again equivalent to the desired formula by Lemma \ref{lem:cof} and $\cap$-stability of $\B$.

Conversely, suppose that $\F:\O(X)^{\op} \to \C$ is a right Kan extension of its restriction to $\B^{\op}$ and that $\F|{\B^{\op}}$ is a sheaf. Given an open subset $U \subset X$ and an open cover $\{U_i\}_{i \in I}$, we must show that restriction induces an equivalence
\begin{equation}
\label{eq:proofshcon}
\F(U) \simeq \lim_{V \in (\O(X)/\{U_i\})^{\op}} \F(V).
\end{equation}
We have the following diagram of index categories:
\[
\begin{tikzcd}
(\B/\{U_i\})^{\op} \arrow[r, "i"] \arrow[d] & (\B/U)^{\op} \arrow[d]          \\
(\O(X)/\{U_i\})^{\op} \arrow[r, "j"]           & (\O(X)/U)^{\op}
\end{tikzcd} 
\]
By assumption, $\F$ is a right Kan extension along the inclusion $\B^{\op} \to \O(X)^{\op}$. It follows that the restriction of $\F$ is also a right Kan extension along the vertical inclusions in the above diagram. We shall now prove that the restriction of $\F$ is also a right Kan extension along $i$:

For $B \in \B/U$, the basis opens $\{\B/\{U_i \cap B\}\} =\{B' \in \B/\{U_i\} | B' \subset B\}$ form an open cover of $B$, since $\{U_i\}$ is an open cover of $U \supset B$ and $\B$ is a basis. It therefore follows from the sheaf condition on $\B$ that
\[
    \F(B) \simeq \lim_{V \in (\B/\{U_i \cap B\})^{\op}} \F(V),
\]
because $\B/\{U_i \cap B\}$ is downward-closed  by definition. This is precisely the pointwise formula which witnesses that the restriction of $\F$ is a right Kan extension along $i:(\B/\{U_i\})^{\op} \to (\B/U)^{\op}$.

Finally, it follows from the dual of \cite[4.3.2.8]{lurie:htt}  that the restriction of $\F$ is a right Kan extension along $j$ as well. The pointwise formula for this extension now yields \eqref{eq:proofshcon}.
\end{proof}

\section{Symmetrization of simplicial sets}
\label{sec:symmetrization}
We will utilize a {\em symmetrization} construction for simplicial sets,
which we now analyze. Let $\Fin$ denote the category with an object $\langle
n \rangle$ for each $n \geq 0$ and morphisms $\langle n \rangle \to \langle m
\rangle$ given by maps $\{0, \dots,n\} \to \{0, \dots,m\}$. For a
simplicial set $K$, we denote by $S(K): \Fin^{\op} \to \Set$ the left Kan
extension of $K$ along the inclusion $\Delta^{\op} \subset \Fin^{\op}$.
Explicitly, we have
\[
    S(K)_n \cong ( \coprod_{f:\langle n \rangle \to \langle m \rangle} K_m )_{/\sim}
\]
where the relation is generated by the identifications $(\sigma,g \circ f)
\sim (g^{*}(\sigma),f)$, $g$ ranging over the set of weakly increasing maps.

The following notation will be useful. For a map $f: \langle n \rangle \to
\langle m \rangle$ in $\Fin$, we write $f=f_i \circ f_s$ for the unique
factorization with $f_i$ strictly increasing and $f_s$ surjective.
Given $\sigma \in K_n$, we write $(\bar{\sigma},s_{\sigma})$ for the
unique pair given by $\bar{\sigma} \in K_m$ nondegenerate and $s_{\sigma}:[n] \to [m]$
surjective, such that $s_{\sigma}^*(\overline{\sigma})=\sigma$.

\begin{lem}
    \label{lem:minimal}
    Let $K$ be a simplicial set and $n \ge 0$. Then every equivalence class in
    $S(K)_n$ has a unique representative $(\sigma,f)$ with $\sigma$
    nondegenerate and $f$ surjective.
\end{lem}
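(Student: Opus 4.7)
The plan is to build an explicit reduction-to-normal-form map
\[
    \rho \colon \coprod_{f : \langle n \rangle \to \langle m \rangle} K_m \longrightarrow \{ (\bar\sigma, f) \mid \bar\sigma \in K \text{ nondegenerate},\ f \in \Fin \text{ surjective} \}
\]
and verify that (a) each representative $(\tau, h)$ is equivalent to $\rho(\tau, h)$ modulo $\sim$, giving existence, and (b) $\rho$ is constant on the generating relation, giving uniqueness.

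The definition of $\rho$ is dictated by the two operations at hand. Given $(\tau, h)$ with $\tau \in K_k$ and $h \colon \langle n \rangle \to \langle k \rangle$ in $\Fin$, I first factor $h = h_i \circ h_s$ in $\Fin$. Since $h_i$ is strictly increasing, it is weakly increasing, and the defining relation yields $(\tau, h) \sim (h_i^* \tau, h_s)$. Writing $h_i^* \tau = s^* \bar\sigma$ by the Eilenberg-Zilber decomposition introduced before the lemma, a second application of the relation gives $(h_i^* \tau, h_s) \sim (\bar\sigma, s \circ h_s)$. Since both $s$ and $h_s$ are surjective, so is $s \circ h_s$, and I set $\rho(\tau, h) := (\bar\sigma, s \circ h_s)$. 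This simultaneously exhibits a normal form representative and settles existence.

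For uniqueness, I verify $\rho(\sigma, g \circ f) = \rho(g^* \sigma, f)$ for every generator of $\sim$. Let $f = f_i \circ f_s$ be the $\Fin$-factorization, and let $g \circ f_i = u \circ v$ be the (strictly-increasing, surjective) factorization of $g \circ f_i$ in $\Delta$. By uniqueness of factorization in $\Fin$, the decomposition of $g \circ f$ is then $u \circ (v \circ f_s)$. Hence the first stage of reduction gives $(u^* \sigma, v \circ f_s)$ on one side and $((g \circ f_i)^* \sigma, f_s) = (v^* u^* \sigma, f_s)$ on the other. Decomposing $u^* \sigma = s^* \bar\sigma$ by Eilenberg-Zilber yields $v^* u^* \sigma = (s \circ v)^* \bar\sigma$, and since $s \circ v$ is again a weakly increasing surjection, Eilenberg-Zilber uniqueness identifies this as the canonical decomposition of $v^* u^* \sigma$. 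Both sides therefore reduce to $(\bar\sigma, s \circ v \circ f_s)$.

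The main technical point, more bookkeeping than obstacle, is keeping straight the two factorizations in play: the (strictly-increasing, surjective) factorization of morphisms in $\Fin$ and the Eilenberg-Zilber (nondegenerate, surjective-$\Delta$) factorization of simplices. The argument ultimately rests on the uniqueness of each factorization and on the stability of surjections under composition.
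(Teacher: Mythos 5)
Your proposal is correct and follows essentially the same route as the paper: both define the same normal-form map (factor $f=f_i\circ f_s$ in $\Fin$, pull back along $f_i$, then apply the Eilenberg--Zilber decomposition), verify existence by applying the generating relation twice, and check invariance on generators using the uniqueness of the two factorizations. Your identification $(g\circ f)_i = u$ and $(g\circ f)_s = v\circ f_s$ is exactly the paper's identity $(g\circ f)_i=(g\circ f_i)_i$ in slightly different notation, so there is nothing substantive to add.
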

\begin{proof}
    Let $M_n$ denote the set of pairs $(\sigma,f)$ with $\sigma \in K_m$
    nondegenerate and $f: \langle n \rangle \to \langle m \rangle$ surjective.
    We will show that the canonical map $M_n \to S(K)_n$ is an isomorphism by
    constructing an inverse
    \[
	R:S(K)_n \to M_n.
    \]
    Given an arbitrary representative
    $(\sigma,f)$ of an element of $S(K)_n$, we have
	\[
		(\sigma, f) \sim (f_i^* \sigma, f_s) \sim (\overline{f_i^*\sigma},s_{f_i^* \sigma} \circ f_s).
	\]
    It is clear that the assignment
    $R(\sigma,f)=(\overline{f_i^*\sigma},s_{f_i^* \sigma} \circ f_s)$ defines
    an inverse if it descends to the quotient. Suppose that $g:[m] \to [l]$
    is weakly increasing. We must show that $R$ sends $(\sigma,g \circ f)$ and
    $(g^* \sigma,f)$ to the same element of $M_n$. We calculate
	\[
        R(\sigma,g \circ f) = (\overline{(g \circ f)_i^* \sigma},s_{(g \circ f)_i^* \sigma} \circ (g \circ f)_s). 
    \]
    For convenience, we abbreviate $\tau=\overline{(g \circ f)_i^* \sigma}$ and
    $s=s_{(g \circ f)_i^* \sigma}$. Note that $(g \circ f_i)_s$ is weakly
    increasing, since $g$ and $f_i$ are. We obtain
	\[
		(s \circ (g \circ f_i)_s)^*\tau =(g \circ f_i)_s^*(g \circ f)_i^* \sigma =(g \circ f_i)^* \sigma,
	\]
    where the first equality comes from the definition of $\tau$ and $s$, and
    the second comes from the fact that $(g \circ f)_i = (g \circ f_i)_i$. We
    also have
	\[
		s \circ (g \circ f_i)_s \circ f_s=s \circ (g \circ f)_s,
	\]
    which shows that $(\sigma,g \circ f)$ and $(g^*\sigma,f)$ do indeed have
    the same image under $R$.
\end{proof}

We refer to the unique representative from Lemma \ref{lem:minimal} as a {\em minimal} representative. 

\begin{lem}
    \label{lem:fin_top}
    Let $f:\langle n \rangle \to \langle l \rangle$ be a map of finite sets. Then 
    the simplicial set $C^f$ defined by
    \[
        C^f_m=\{g: \langle m \rangle \to \langle n \rangle\; |\;  f \circ g \text{ weakly increasing} \}
    \]
    with functoriality via precomposition, is weakly contractible.
\end{lem}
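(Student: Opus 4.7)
The plan is to identify $C^f$ with the nerve of a preorder and to exhibit an initial object.

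Endow $\langle n \rangle = \{0, \dots, n\}$ with the preorder $i \preceq j \iff f(i) \le f(j)$; reflexivity and transitivity are immediate. Under the convention that categories are identified with their nerves, an $m$-simplex of the preorder $P := (\langle n \rangle, \preceq)$ is a chain $p_0 \preceq \dots \preceq p_m$, equivalently a map $g: \langle m \rangle \to \langle n \rangle$ with $f \circ g$ weakly increasing; as the simplicial structure on both sides is precomposition with maps in $\Delta$, this yields an isomorphism of simplicial sets $C^f \cong P$.

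Since $\langle n \rangle$ is nonempty, pick $i_0 \in \langle n \rangle$ with $f(i_0) = \min_{0 \le i \le n} f(i)$; then $i_0 \preceq j$ for every $j$, so $i_0$ is an initial object of $P$, and the inclusion $\{i_0\} \hookrightarrow P$ is coinitial in the sense of Section \ref{sec:conventions}. It is standard that the nerve of a small category with an initial object is weakly contractible. Explicitly, the unique morphisms $i_0 \to p$ assemble into a simplicial homotopy
\[
h : C^f \times \Delta^1 \to C^f, \qquad h(g, \alpha)(j) = \begin{cases} i_0 & \text{if } \alpha(j) = 0, \\ g(j) & \text{if } \alpha(j) = 1, \end{cases}
\]
from the constant map at $i_0$ to the identity: naturality in $[m] \in \Delta^{\op}$ is manifest from the formula, and $f \circ h(g, \alpha)$ is weakly increasing because its initial constant-$i_0$ stretch realizes $\min f$ and its tail is a terminal segment of $f \circ g$.

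No serious obstacle arises; the decisive point is that the constraint \emph{$f \circ g$ weakly increasing} realizes $C^f$ as the nerve of a preorder with a minimum element, after which contractibility is standard.
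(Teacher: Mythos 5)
Your proof is correct, and every step checks out: the identification of $C^f$ with the nerve of the preorder $i \preceq j \iff f(i) \le f(j)$ is exactly right (an $m$-simplex of that nerve is a $\preceq$-chain, i.e.\ a map $g$ with $f \circ g$ weakly increasing, and both functorialities are precomposition), any $i_0$ minimizing $f$ is initial, and your explicit homotopy $h$ is a well-defined simplicial map contracting $C^f$ to $i_0$ (the transition point is fine since $f(i_0) \le f(g(j))$ for all $j$). The paper's proof rests on the same extremal-element observation but at the other end and in a different register: it picks $i$ with $f(i)$ \emph{maximal}, extends each simplex $g$ to $\tilde g$ by appending $i$ as a final vertex, and then contracts the realization $|C^f|$ onto the point $i$ by straight-line homotopies in the simplices $|\Delta^{m+1}_{\tilde g}|$, checking that these descend to the quotient --- a topological cone argument. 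Your route is purely simplicial/categorical: recognizing $C^f$ as the nerve of a preorder with an initial object yields contractibility by a standard fact, with the simplicial homotopy making it self-contained, and it sidesteps the paper's slightly informal ``descend to the quotient'' step. The paper's version buys directness at the level of realizations without invoking nerves of preorders; yours buys brevity and the sharper structural statement that $C^f$ is literally the nerve of a preorder with a minimum, hence simplicially (not just weakly) contractible. (The aside about $\{i_0\} \hookrightarrow P$ being coinitial is not needed for the conclusion, but it is true and harmless.)
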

\begin{proof}
Choose $i \in \{0, \dots, n\}$ such that $f(i) \geq f(j)$ for all $j \in \{0, \dots,n\}$. For every $g \in C^f_m$, we define the morphism $\tilde{g}: \langle m+1 \rangle \to \langle n \rangle$ by $\tilde{g} \circ \partial_{m+1}=g$ and $g(m+1)=i$. The map $f \circ \tilde{g}$ is then also weakly increasing so that $\tilde{g} \in C^f_{m+1}$. We now describe a contraction of the topological space $|C^f|$ onto the point $i$: For each $g \in C^f_m$ there is a homotopy
\[
H_g: [0,1] \times |\Delta^m_g| \to |\Delta^{m+1}_{\tilde{g}}| \to |C^f|
\]
from the natural inclusion in the image of $|\Delta^{m+1}_{\tilde{g}}|$ to $i$ via straight lines. These homotopies descend to the quotient to yield a contraction of $|C^f|$.
\end{proof}

\begin{prop}
    \label{prop:sym_unit_coinit}
    Let $K$ be a simplicial set and $S(K)$ its symmetrization. Then the functor
	\[
		\Delta/K \to \Fin/S(K)
	\]
	is coinitial.
\end{prop}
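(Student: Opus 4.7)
The plan is to reduce coinitiality to the weak contractibility of a single simplicial set, and then to contract it by a straight-line argument modeled on the proof of Lemma \ref{lem:fin_top}.

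First I would unpack the target slice: an object of $(\Fin/S(K))_{/(\langle n\rangle,\sigma)}$ is a morphism $(\langle m\rangle,\phi^{*}\sigma)\to(\langle n\rangle,\sigma)$ in $\Fin/S(K)$, equivalently a single map $\phi:\langle m \rangle \to \langle n \rangle$ in $\Fin$. Hence the fiber product
\[
\mathcal{S}:=(\Delta/K)\times_{\Fin/S(K)}(\Fin/S(K))_{/(\langle n\rangle,\sigma)}
\]
has as objects triples $([m],\tau\in K_m,\phi:\langle m\rangle\to\langle n\rangle)$ satisfying $[\tau,\id_{\langle m\rangle}]=\phi^{*}\sigma$ in $S(K)_m$. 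The uniqueness clause in Lemma \ref{lem:minimal} implies that the unit map $K_m\to S(K)_m$, $\tau\mapsto[\tau,\id_{\langle m\rangle}]$, is injective, so $\tau$ is in fact determined by $\phi$. This lets me identify $\mathcal{S}$ with the category of simplices $\Delta/X$ of the simplicial set
\[
X_m=\{\phi:\langle m\rangle\to\langle n\rangle\mid\phi^{*}\sigma\in\im(K_m\hookrightarrow S(K)_m)\},
\]
with simplicial structure given by precomposition with weakly increasing maps. Since $|\Delta/X|\simeq |X|$, it suffices to show $|X|$ is weakly contractible.

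Let $(\bar\sigma,s_\sigma:\langle n\rangle\to\langle n'\rangle)$ be the minimal representative of $\sigma$ provided by Lemma \ref{lem:minimal}, and choose $i^{*}\in\langle n\rangle$ with $s_\sigma(i^{*})=n'$. I would then mimic the proof of Lemma \ref{lem:fin_top} by introducing the simplicial map
\[
H:\Delta^1\times X\to X,\qquad H(\alpha,\phi)(j)=\begin{cases}\phi(j)&\alpha(j)=0,\\ i^{*}&\alpha(j)=1,\end{cases}
\]
for $\alpha:[m]\to[1]$ weakly increasing and $\phi\in X_m$. At $\alpha\equiv 0$ this is $\id_X$ and at $\alpha\equiv 1$ it is the constant map through the vertex $(i^{*})\in X_0$; provided $H$ is well defined, it exhibits $|X|$ as contractible.

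The main obstacle is to verify that $H$ actually lands in $X$, equivalently that replacing a terminal segment of any $\phi\in X_m$ by copies of $i^{*}$ yields another element of $X_m$. By iteration this reduces to the one-step case: the extension $\tilde\phi:\langle m+1\rangle\to\langle n\rangle$ with $\tilde\phi|_{\langle m\rangle}=\phi$ and $\tilde\phi(m+1)=i^{*}$ belongs to $X_{m+1}$. I would handle this using the explicit formula for minimal representatives from the proof of Lemma \ref{lem:minimal}. Writing $s_\sigma\phi=f_i\circ f_s$ and $f_i^{*}\bar\sigma=t^{*}\rho$, the condition $\phi\in X_m$ says precisely that the composition $t\circ f_s$ is weakly increasing. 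A case distinction on whether $n'$ already lies in $\im(s_\sigma\phi)$ shows that the analogous factorization for $\tilde\phi$ differs from that for $\phi$ only by appending a new maximum at position $m+1$, so the corresponding composition for $\tilde\phi$ is obtained by appending the top element of the target to $t\circ f_s$, which preserves weak monotonicity. The remaining items—the functoriality of $H$, its boundary values at $\alpha\equiv 0,1$, and the equivalence $|\Delta/X|\simeq |X|$—are routine.
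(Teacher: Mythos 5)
Your reduction of coinitiality to the weak contractibility of the single simplicial set $X$ (the paper's $H^{[\sigma,f]}$), together with the identification of the comma category with $\Delta/X$ via the injectivity of the unit $K_m\to S(K)_m$, matches the paper's setup exactly. The gap is in the contraction: the one-step extension claim, that $\phi\in X_m$ implies $\tilde\phi\in X_{m+1}$ where $\tilde\phi(m+1)=i^*$, is false, and consequently $H$ does not land in $X$. Take $K=\Delta^2/\Delta^{\{0,1\}}$, let $\omega\in K_2$ be the nondegenerate $2$-simplex (so $d_2\omega=s_0v$ is degenerate, $v$ being the collapsed vertex), and let $\sigma=[\omega,\id_{\langle 2\rangle}]$, so that $n=n'=2$, $s_\sigma=\id$ and necessarily $i^*=2$. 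The non-monotone injection $\phi=(1,0):\langle 1\rangle\to\langle 2\rangle$ lies in $X_1$: its image is $\{0,1\}$, the corresponding face of $\omega$ is the degenerate simplex $s_0v$, and after collapsing by $t=s_{s_0v}$ the composite $t\circ f_s$ is constant, hence weakly increasing; indeed $\phi^*\sigma=[s_0v,\id]$. But $\tilde\phi=(1,0,2)$ is a non-monotone bijection of $\langle 2\rangle$, so the minimal representative of $\tilde\phi^*\sigma$ is $[\omega,(1,0,2)]$ itself, which is not of the form $[\tau,\id]$; hence $\tilde\phi\notin X_2$. (Likewise $H$ sends the $2$-simplex $((0,0,1),(1,0,0))$ of $\Delta^1\times X$ to $(1,0,2)\notin X_2$.) The assertion that the factorization for $\tilde\phi$ ``differs only by appending a new maximum'' breaks exactly when $n'\notin\im(s_\sigma\phi)$ and the face $(s_\sigma\phi)_i^*\bar\sigma$ is degenerate while the enlarged face $(s_\sigma\tilde\phi)_i^*\bar\sigma$ is not: the collapsing surjection $t$ that rescued monotonicity for $\phi$ is then no longer available for $\tilde\phi$.

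This is precisely the difficulty the paper's proof is built around. The straight-line cone to a preimage of the maximum does work on the subcomplex $C^{s_\sigma}\subset X$ of maps $g$ with $s_\sigma\circ g$ weakly increasing --- that is the content of Lemma \ref{lem:fin_top} --- but $X$ is in general strictly larger, the extra simplices coming from maps whose image misses part of $[n]$ and which become admissible only because the corresponding face of $\bar\sigma$ is degenerate. The paper therefore covers $X$ by $C^{s_\sigma}$ together with the subcomplexes $H_{(1,A)}$ of maps with image in a proper subset $A\subsetneq[n]$ (each isomorphic to an instance of the same problem for smaller $n$), and glues these contractible pieces with Lurie's van Kampen theorem by induction on $n$. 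Some such decomposition, or another device handling the degenerate-face phenomenon, is needed; the global cone argument cannot be repaired merely by a cleverer verification of the extension step, since the step itself is false.
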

\begin{proof}
    Applying Quillen's Theorem A, we need to show that, for $[\sigma,f] \in
    S(K)_n$, which we assume to be given by a minimal representative, the category
    $(\Delta/K)/[\sigma,f]$ is weakly contractible. This category can be
    identified with the category of simplices of the simplicial set
    $H^{[\sigma,f]}$, whose $m$-simplices are given by pairs
    \[
	(g:\langle m \rangle \to \langle n \rangle,\tau \in K_m)
    \]
    such that $[\tau,id]=[\sigma,f
    \circ g]$.
    Since any equivalence class has at most one representative of the form
    $[\tau,id]$, this represents no additional data, and we have an inclusion
    $H^{[\sigma,f]} \subset \Delta/\Fin(-,\langle n \rangle)$.
	
    We show that $|H^{[\sigma,f]}|$ is contractible for each $[\sigma, f] \in
    S(K)_n$, by induction on $n$. The base case is trivial, so assume the claim is
    true for all $m < n$. Let $J = (\Delta^1 \times \P([n])\backslash\{\emptyset\})
    \backslash \{(1,[n])\}$. We will describe a covering
    \[
    F:J \to \O(|H^{[\sigma,f]}|)
    \]
    of $|H^{[\sigma,f]}|$ by contractible subspaces, that satisfies the
    assumptions of Lurie's van-Kampen Theorem \cite[Theorem A.3.1]{lurie:ha}. As $J$ is cofiltered, hence
    weakly contractible, this will imply the claim.

    For $A \subset \{0, \dots, n\}$, let $H_{(1,A)} \subset H^{[\sigma,f]}$
    denote the simplicial subset with
    \[
    (H_{(1,A)})_m=\{g \in H^{[\sigma,f]}_m | \im g \subset A\}.
    \]
    Let further $H_{(0,A)} \subset H_{(1,A)}$ denote the simplicial subset with
    \[
    (H_{(0,A)})_m= \{g: \langle m \rangle \to \langle n \rangle | \im g \subset A \text{ and } f \circ g \text{ is weakly increasing} \}.
    \]
    These simplicial subsets provide a covering
    \[
        H^{[\sigma,f]} = \bigcup_{A \subsetneq [n]} H_{(1,A)} \cup H_{(0,[n])}.
    \]
    Indeed, if a given $g \in H^{[\sigma,f]}_m$ is not contained in any of the
    $(H_{(1,A)})_m$ for $S \subsetneq [n]$, it is necessarily surjective.
    Therefore $[\sigma,f \circ g]$ is its own minimal representative and this
    class is equal to some $[\tau,id]$ if and only if $f \circ g$ is weakly
    increasing, i.e. $g \in (H_{0,[n]})_m$.

    For each nonempty $A \subset [n]$, let $\iota_A:[|A|-1] \to [n]$ denote the
    increasing map with image $A$. For a proper subset $A$, the assignment $h
    \mapsto \iota_A \circ h$ describes an isomorphism $K^{[\sigma,f \circ
    \iota_A]} \cong K_{(1,A)}$ and the former simplicial set is weakly
    contractible by inductive hypothesis. Furthermore, for any subset $A$, the
    above assignment induces an isomorphism $C^{f \circ \iota_A} \cong
    H_{(0,A)}$, and we have shown in Lemma \ref{lem:fin_top} that the former is
    weakly contractible.

    Recall that any subcomplex $C$ of a CW-complex $X$ has an open neighborhood
    $C_{\epsilon} \subset X$ of which it is a deformation retract, and that
    such an open neighborhood construction can be chosen to commute with
    intersections.
    The diagram
    \[
    F(b,A)=|H_{b,A}|_{\epsilon}
    \]
    then describes a covering of $|H^{[\sigma,f]}|$ by contractible opens. It
    is clear from the $\cap$-stability of the $(-)_{\epsilon}$-construction
    that $F^{-1}(x)$ is cofiltered, hence weakly contractible, for any point $x
    \in |H^{[\sigma,f]}|$.  We may therefore apply Lurie's van-Kampen theorem
    to conclude that $|H^{[\sigma,f]}|$ is contractible, as desired.
\end{proof}

\begin{cor}
The inclusion $\Delta \to \Fin$ is coinitial.
\end{cor}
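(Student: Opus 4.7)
The plan is to deduce this as a direct corollary of Proposition~\ref{prop:sym_unit_coinit} by specializing to the terminal simplicial set $K = \Delta^0$. The main task is to identify $\Delta/\Delta^0$ and $\Fin/S(\Delta^0)$ with $\Delta$ and $\Fin$ respectively, in a way that makes the functor of Proposition~\ref{prop:sym_unit_coinit} match the inclusion $\Delta \hookrightarrow \Fin$.

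First, I would check that $S(\Delta^0)$ is the terminal object, i.e.\ has exactly one element in each degree. By Lemma~\ref{lem:minimal}, an $n$-simplex of $S(\Delta^0)$ is uniquely represented by a minimal pair $(\sigma, f)$ where $\sigma \in (\Delta^0)_m$ is nondegenerate and $f\colon \langle n\rangle \to \langle m\rangle$ is surjective. The only nondegenerate simplex of $\Delta^0$ lives in degree $0$, which forces $m = 0$, and there is a unique surjection $\langle n\rangle \to \langle 0\rangle$. Hence $|S(\Delta^0)_n| = 1$ for all $n$.

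Consequently, $\Delta/\Delta^0$ is canonically isomorphic to $\Delta$ and $\Fin/S(\Delta^0)$ is canonically isomorphic to $\Fin$, since there is a unique simplex to choose in each degree on either side. Unwinding the definition of the functor $\Delta/K \to \Fin/S(K)$ from Proposition~\ref{prop:sym_unit_coinit}, it sends $[n] \in \Delta$ to $\langle n\rangle \in \Fin$ and a simplicial operator to the corresponding map of pointed finite sets, so under the above identifications it is precisely the standard inclusion. Coinitiality of $\Delta \to \Fin$ now follows immediately from Proposition~\ref{prop:sym_unit_coinit}. There is no substantive obstacle here; the real work has been done in the preceding proposition.
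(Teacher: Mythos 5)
Your proof is correct and is exactly the paper's argument: the paper's entire proof reads ``This is the case $K=\Delta^0$,'' and you have simply supplied the (straightforward) verification that $S(\Delta^0)$ is terminal and that the slice categories identify with $\Delta$ and $\Fin$. One trivial slip: the morphisms in $\Fin$ are arbitrary maps of finite sets $\{0,\dots,n\}\to\{0,\dots,m\}$, not maps of \emph{pointed} finite sets.
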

\begin{proof}
This is the case $K=\Delta^0$.
\end{proof}

\begin{cor}
\label{cor:forget_fin}
For a functor $H:\Fin^{\op} \to \Set$, the functor $\Delta/H \to \Fin/H$ is coinitial.
\end{cor}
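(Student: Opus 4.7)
The plan is to reduce the corollary to the preceding one ($\Delta \to \Fin$ is coinitial) via a base-change observation for slice categories of categories of elements. By the definition of coinitiality, it suffices to show that for every object $b = (\langle n \rangle, x) \in \Fin/H$, the slice
\[
(\Delta/H) \times_{\Fin/H} (\Fin/H)_{/b}
\]
is weakly contractible.

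The key step is to identify this slice with the pullback $\Delta \times_\Fin \Fin_{/\langle n \rangle}$, independent of $H$ and $x$. An object of the slice consists of $(\langle m \rangle, \tau) \in \Delta/H$ equipped with a morphism $g : (\langle m \rangle, \tau) \to (\langle n \rangle, x)$ in $\Fin/H$; by definition such a $g$ is a map $\langle m \rangle \to \langle n \rangle$ in $\Fin$ satisfying $g^* x = \tau$, which determines $\tau$ in terms of $g$ and $x$. Hence objects correspond to pairs $(\langle m \rangle, g)$ with $g \in \Fin(\langle m \rangle, \langle n \rangle)$. A morphism between two such objects is a weakly increasing map $h : \langle m_1 \rangle \to \langle m_2 \rangle$ with $g_2 \circ h = g_1$; the compatibility with the $\tau_i$ is then automatic, since both are pulled back from $x$. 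This matches the description of $\Delta \times_\Fin \Fin_{/\langle n \rangle}$.

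Once this identification is in place, the conclusion follows at once from the preceding corollary: coinitiality of $\Delta \hookrightarrow \Fin$ means precisely that $\Delta \times_\Fin \Fin_{/\langle n \rangle}$ is weakly contractible for every $n \ge 0$. There is no serious obstacle; the entire argument is a routine unwinding of definitions together with a direct appeal to the preceding corollary. The only subtlety worth emphasizing is that the element data $\tau$ contributes nothing extra to the slice, which is what permits the base change to strip $H$ from the picture.
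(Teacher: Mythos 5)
Your proof is correct and follows essentially the same route as the paper: both exploit the pullback square $\Delta/H \cong \Delta \times_{\Fin} (\Fin/H)$ to reduce the statement to the coinitiality of $\Delta \to \Fin$. The only difference is that the paper invokes the general fact that coinitiality is preserved under pullback along Cartesian fibrations (citing \cite[4.1.2.14]{lurie:htt}), whereas you verify the resulting identification of comma categories by hand; your observation that the element datum $\tau$ is determined by $g$ and $x$ is exactly the content of that citation in this special case.
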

\begin{proof}
There is a pullback square
\[\begin{tikzcd}
	{\Delta/H} & {\Fin/H} \\
	\Delta & \Fin
	\arrow[from=1-1, to=1-2]
	\arrow[from=1-1, to=2-1]
	\arrow[from=1-2, to=2-2]
	\arrow[from=2-1, to=2-2]
\end{tikzcd}\]
where the bottom arrow is coinitial by the above corollary and the vertical arrows are Cartesian.
 The conclusion follows from \cite[Proposition 4.1.2.14]{lurie:htt}.
\end{proof}

\begin{rem}
The coinitiality of $\Delta \to \Fin$ on its own is not sufficient to formally conclude the statement of Proposition \ref{prop:sym_unit_coinit}. For example, the inclusion of posets
\[
I=\Delta^{\{0,1\}} \coprod_{\Delta^{\{1\}}} \Lambda_0^2 \to J=\Delta^2 \coprod_{\Delta^{\{1,2\}}} \Delta^2
\]
is coinitial, but, letting $v$ denote the final vertex of $J$, $I/\Hom_I(-,v) \to J/\Hom_J(-,v)$ is not coinitial: Indeed, one of the relevant comma categories is empty by inspection.
\end{rem}

\section{Hyperdescent}
\label{sec:Hyperdescent}

\begin{defi}
  \label{def:hypercovering}
  A {\em hypercovering $(K, \U)$ of $X$}
  consists of 
  \begin{enumerate}[label=\arabic*.]
      \item a simplicial set $K$, 
      \item a functor $\U: \Delta/K \to \O(X)^{\op}$, where $\Delta/K$ denotes
          the category of simplices of $K$,
    \end{enumerate}
  subject to the following condition
    \begin{itemize}
      \item for every $n \geq 0$ and every
        \[
            \tau: \partial\Delta^n \to K,
        \]
        we have
        \begin{equation}
          \label{eq:hypercovering}
          \bigcup_{\substack{\sigma \in K_n\\\sigma|\partial \Delta^n = \tau}} \U_{\sigma}= \bigcap_{i = 0}^{n} \U_{\partial_i \tau}.
        \end{equation}
    \end{itemize}
\end{defi}

\begin{rem}
	\label{rem:addendum_hypercover}
	\begin{enumerate}
		\item For $n = 0$, the condition \eqref{eq:hypercovering} reads
		\[ 
			\bigcup_{i \in K_0} \U_i = X 
		\]

		\item We will typically consider, for a fixed topological space
			$X$, hypercoverings of various open subsets $U \subset
			X$. In this case, we consider $U$ as a topological
			space with the induced subspace topology.

		\item Given a hypercovering $(K,\U)$, we refer to the
			simplicial set $K$ as the {\em spine} of the
			hypercovering and to the diagram $\U$ as the {\em
			hypercovering diagram}.

        \item It will often be convenient to complete a hypercovering diagram to a cone
            \[
                \U^{+}: (\Delta/K)^{\lhd} \to \O(X)^{\op}
            \]
            by assigning to the tip of the cone, labelled naturally by the
            empty simplex $\emptyset$, the open subset $X \subset X$. 

        \item Our ad-hoc formulation of the hypercovering condition
            \eqref{eq:hypercovering} admits a more abstract (and general)
            formulation in terms of coskeleta, but the form we have chosen
            seems most reasonable for our purposes. 
	\end{enumerate}
\end{rem}

\begin{exa}
	\label{ex:usual_covering}
	Let $X$ be a topological space, let $I$ be a set, and let $\{U_i\}_{i \in
	I}$ be an open covering of $X$. Define $K$ to be the nerve of the category
	with set of objects $I$ and a unique morphism between any pair of objects
	(so that $K_n = I^{n+1}$). Then the association
	\[
		(i_0, i_1, ..., i_n) \mapsto U_{i_0} \cap U_{i_1} \cap ... \cap U_{i_n}
	\]
	extends to a functor $\Delta/K \to \O(X)^{\op}$ which is a hypercovering
	of $X$. Indeed, the required equality \eqref{eq:hypercovering}, for $n>0$,
	amounts to
	\[
		U_{(i_0, i_1, ..., i_n)} = U_{i_0} \cap U_{i_1} \cap ... \cap U_{i_n}.
	\]
\end{exa}

Note that, in Example \ref{ex:usual_covering}, since $K$ is the nerve of a
contractible groupoid, its geometric realization $|K|$ is contractible. The spine of a
general hypercovering does, of course, not need to be contractible.

\begin{defi}
	A presheaf
	\[	
	    \F: \B^{\op} \to \C
	\]
	is called a $\B$-{\em hypersheaf} if, for every basis open $B \in \B$
	and every hypercovering $(K,\U)$ of $B$ with values in $\B$, the composite diagram
	\begin{equation}
        \label{eq:hyperdescent}
		\F \circ \U^{+}: \Delta^+/K \to \C 
	\end{equation}
	is a limit cone. 
	
	In particular, if $\B=\O(X)$, we call $\F$ a hypersheaf on $X$.
\end{defi}

For any hypercovering $(K,\U)$ of $X$, we will construct a hypercovering
\[
    (K^{\B},\U^{\B}:\Delta/K^{\B} \to \B^{\op})
\]
of $X$ by elements of $\B$, which is a refinement of $(K,\U)$: It comes equipped with a map of simplicial sets $\pi:K^{\B} \to K$, such that, for $\sigma \in K^{\B}$, we have $\U^{\B}(\sigma) \subset \U(\pi(\sigma))$.

An element of $K^{\B}_n$ is given by
\begin{enumerate}
    \item an $n$-simplex $\sigma \in K_n$,
    \item a map $O:\P([n])\backslash \{\emptyset\} \to \B^{\op}$,
\end{enumerate}
such that the following
condition holds:
\begin{itemize}
    \item
    For every nonempty subset $A \subset [n]$, we have $O(A)
    \subset \U(\iota_A^*(\sigma))$, where $\iota_A: \langle m \rangle \to \langle n
    \rangle$ denotes the unique increasing map with $\im \iota_A =A$.
\end{itemize}

The functoriality of $K^{\B}$ is induced by that of $K$ and $\Delta$, and
$\pi:K^{\B} \to K$ is the natural projection map. The functor $\U^{\B}:
\Delta/K^{\B} \to \B^{op}$ is given by
\[
    (\sigma,O) \mapsto O([n])
\]
This yields a hypercovering of $X$ since $(K,\U)$ is
a hypercovering and $\B$ is a basis.

The set $K^{\B}_n$ can be naturally equipped with the structure of a category
(in fact a poset): There is a morphism $(\sigma,O) \to (\tau,U)$ if
$\sigma=\tau$ and $U(A) \subset O(A)$ for all $A \subset [n]$. We denote by
$\tilde{K}^{\B}$ the simplicial category comprising these posets. There is a
again a functor $\tilde{U}^{\B}:\Delta/\tilde{K}^{\B} \to \B^{\op}$ given as
above. There is also a commutative triangle:
\[
\begin{tikzcd}
\Delta/K^{\B} \arrow[r] \arrow[rd,"\Delta/\pi"'] & \Delta/\tilde{K}^{\B} \arrow[d,"\Delta/\pi"] \\
                                   & \Delta/K                       
\end{tikzcd}
\]

Similarly, we also introduce $\B$-refinements $S(K)^{\B}: \Fin^{\op} \to \Set$ for the symmetrized version. An element of $S(K)^{\B}_n$ consists of
\begin{enumerate}
\item $[\sigma,f] \in S(K)_n$,
\item a map $O:\P([n])\backslash\{\emptyset\} \to
\B^{\op}$,
\end{enumerate}
such that:
\begin{itemize}
    \item For all nonempty $A \subset \P([n])$, $O(A) \subset
\U(\iota_{f(A)}^*(\sigma))$, where $\iota_{f(A)}$ is the increasing map
with image $f(A)$.
\end{itemize}

The diagram $S(K)^{\B}$ is again naturally the underlying functor of the diagram of poset categories $S(\tilde{K})^{\B}$, which contains an arrow $([\sigma,f],O) \to ([\tau,g],U)$ if and only if $[\sigma,f]=[\tau,g]$ and $U(A) \subset O(A)$ for all nonempty $A \subset [n]$.

There is further a functor
\[
    \tilde{U}^{\B}: \Fin/\tilde{S(K)^{\B}} \to \B^{\op},([\sigma,f],O) \mapsto O([n])
\]
which induces all the others:
\[\begin{tikzcd}
	{\Delta/K^{\B}} & {\Delta/\tilde{K}^{\B}} \\
	{\Fin/S(K)^{\B}} & {\Fin/S(\tilde{K})^{\B}} & {\B^{\op}}
	\arrow[from=1-1, to=1-2]
	\arrow[from=1-1, to=2-1]
	\arrow[from=1-2, to=2-2]
	\arrow[from=2-1, to=2-2]
	\arrow["{\tilde{\U}^{\B}}", from=2-2, to=2-3]
\end{tikzcd}\]

The following two lemmas are concerned with the arrows in this square.
\begin{lem}\label{lem:symmetrization_coinit}
The functors $\Delta/K^{\B} \to \Fin/S(K)^{\B}$ and $\Delta/\tilde{K}^{\B} \to \Fin/S(\tilde{K})^{\B}$ are coinitial.
\end{lem}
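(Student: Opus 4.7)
The plan is to reduce both statements to Corollary \ref{cor:forget_fin} by exhibiting the relevant functors as base changes along the inclusion $\Delta \to \Fin$ and applying (the dual of) \cite[Proposition 4.1.2.14]{lurie:htt}, exactly mirroring the proof of Corollary \ref{cor:forget_fin} itself.

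The first step is to recognize that $K^{\B}$ is precisely the restriction of $S(K)^{\B}: \Fin^{\op} \to \Set$ along $\Delta^{\op} \hookrightarrow \Fin^{\op}$. Indeed, since $S(K)$ is the left Kan extension of $K$ along the fully faithful inclusion $\Delta^{\op} \hookrightarrow \Fin^{\op}$, its restriction back to $\Delta^{\op}$ is $K$ itself, with an element $\sigma \in K_n$ corresponding to the class $[\sigma, \id]$. Unwinding the definition, the additional data $O: \P([n]) \setminus \{\emptyset\} \to \B^{\op}$ together with its constraint involving $\U \circ \iota_{\id(A)}^* = \U \circ \iota_A^*$ matches the definition of $K^{\B}_n$. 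Consequently we obtain a pullback square
\[
\begin{tikzcd}
\Delta/K^{\B} \arrow[r] \arrow[d] & \Fin/S(K)^{\B} \arrow[d] \\
\Delta \arrow[r] & \Fin
\end{tikzcd}
\]
in which the bottom arrow is coinitial by the corollary following Proposition \ref{prop:sym_unit_coinit}, and in which the vertical arrows are the Cartesian fibrations arising from the Grothendieck construction. The dual of \cite[Proposition 4.1.2.14]{lurie:htt} then yields coinitiality of the top arrow.

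For the second functor, one proceeds identically, noting that $\tilde{K}^{\B}$ is the restriction of the $\Cat$-valued diagram $S(\tilde{K})^{\B}: \Fin^{\op} \to \Cat$ along $\Delta^{\op} \hookrightarrow \Fin^{\op}$: the order on the fiber at $\langle n \rangle$ depends only on the data $O$ of a given element, not on the ambient representative $[\sigma, f]$. The analogous pullback square
\[
\begin{tikzcd}
\Delta/\tilde{K}^{\B} \arrow[r] \arrow[d] & \Fin/S(\tilde{K})^{\B} \arrow[d] \\
\Delta \arrow[r] & \Fin
\end{tikzcd}
\]
is again Cartesian, and the vertical arrows are Cartesian fibrations since the Grothendieck construction of any $\Cat$-valued functor is a Cartesian fibration. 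Applying \cite[Proposition 4.1.2.14]{lurie:htt} once more concludes the argument.

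The main obstacle is the book-keeping step of verifying that $K^{\B}$ and $\tilde{K}^{\B}$ really are the $\Delta^{\op}$-restrictions of the symmetrized diagrams, but this is a direct unwinding: the nontrivial content has already been packaged into Proposition \ref{prop:sym_unit_coinit}. Upgrading from $\Set$-valued to $\Cat$-valued coefficients in the second half requires no new ideas beyond the observation that the Grothendieck construction continues to produce Cartesian fibrations, which is precisely the hypothesis needed to invoke \cite[Proposition 4.1.2.14]{lurie:htt}.
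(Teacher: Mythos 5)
There is a genuine gap, and it is exactly the trap that the Remark following Corollary \ref{cor:forget_fin} was written to flag. Your argument hinges on the claim that the square with bottom row $\Delta \to \Fin$ and top row $\Delta/K^{\B} \to \Fin/S(K)^{\B}$ is a pullback, which you justify by asserting that $\Delta^{\op} \hookrightarrow \Fin^{\op}$ is fully faithful, so that restricting the left Kan extension $S(K)$ back to $\Delta^{\op}$ recovers $K$. But the inclusion $\Delta \to \Fin$ is \emph{not} full: $\Fin$ has all maps of finite sets while $\Delta$ has only the weakly increasing ones. Consequently $S(K)|_{\Delta^{\op}} \neq K$ in general. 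Concretely, for $K = \Delta^1$ the class $[\id_{[1]}, \mathrm{swap}] \in S(K)_1$ (the edge with reversed orientation) is its own minimal representative in the sense of Lemma \ref{lem:minimal} and is not of the form $[\sigma,\id]$ for any $\sigma \in K_1$. The honest pullback of $\Fin/S(K)^{\B} \to \Fin$ along $\Delta \to \Fin$ is therefore $\Delta/\bigl(S(K)^{\B}|_{\Delta^{\op}}\bigr)$, which strictly contains $\Delta/K^{\B}$, so \cite[4.1.2.14]{lurie:htt} gives coinitiality of the wrong functor. The same defect afflicts your second square, since $\tilde{K}^{\B}$ is likewise not the $\Delta^{\op}$-restriction of $S(\tilde{K})^{\B}$. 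This is not a repairable bookkeeping slip: the whole point of Proposition \ref{prop:sym_unit_coinit} is that coinitiality of $\Delta \to \Fin$ alone does not formally propagate to $\Delta/K \to \Fin/S(K)$, and your route tries to bypass that proposition entirely.

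The correct base change, which is the one the paper performs, pulls back a different coinitial functor: namely $\Delta/K \to \Fin/S(K)$, which is coinitial by Proposition \ref{prop:sym_unit_coinit} (this is where the real content lives), along the Cartesian fibration $\Fin/S(\tilde{K})^{\B} \to \Fin/S(K)$ that forgets the decoration $O$. That square genuinely is a pullback: the fibre of $\Fin/S(\tilde{K})^{\B} \to \Fin/S(K)$ over $[\sigma,\id]$ is precisely the poset of admissible decorations $O$ of $\sigma$, which is the fibre of $\Delta/\tilde{K}^{\B} \to \Delta/K$ over $\sigma$; the discrete case is analogous. Your instinct to reduce to a known coinitial map by base change along a Cartesian fibration is the right one, but Corollary \ref{cor:forget_fin} --- which applies to honest functors $H:\Fin^{\op}\to\Set$, where $\Delta/H$ by definition means the simplices of the restriction --- is not the applicable input here.
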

\begin{proof}
There is a pullback square
\[\begin{tikzcd}
	{\Delta/\tilde{K}^{\B}} & {\Fin/S(\tilde{K})^{\B}} \\
	{\Delta/K} & {\Fin/S(K)}
	\arrow[from=1-1, to=1-2]
	\arrow[from=1-1, to=2-1]
	\arrow[from=1-2, to=2-2]
	\arrow[from=2-1, to=2-2]
\end{tikzcd}\]
where the bottom arrow is coinitial by Lemma \ref{prop:sym_unit_coinit}. As pullback along Cartesian fibrations preserves coinitiality, the claim follows. The argument for the discrete version of the construction is analogous.

\end{proof}
\begin{lem}\label{lem:sym_categorification_coinit}
The functor $i:\Fin/S(K)^{\B} \to \Fin/S(\tilde{K})^{\B}$ is coinitial.
\end{lem}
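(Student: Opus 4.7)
My plan is to apply Quillen's Theorem A: fix $b=(\langle n\rangle,([\sigma,f],O_b))\in \Fin/S(\tilde{K})^{\B}$ with $[\sigma,f]$ in minimal representative form, and show the slice $i/b$ has weakly contractible nerve.

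Unwinding the Grothendieck-construction definitions identifies $i/b$ with $\Fin/G_b$, where $G_b\colon \Fin^{\op}\to\Set$ sends $\langle m\rangle$ to the set of pairs $(\phi,O)$, with $\phi\colon \langle m\rangle\to\langle n\rangle$ a map in $\Fin$ and $O\colon \P([m])\backslash\{\emptyset\}\to \B^{\op}$ a function satisfying $O_b(\phi(A))\subset O(A)\subset \U(\iota_{f(\phi(A))}^{*}\sigma)$ for every nonempty $A\subset [m]$, with pullbacks $\psi^{*}(\phi,O)=(\phi\circ\psi,\, O\circ\psi_{*})$ for $\psi_{*}(A)=\psi(A)$. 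By Corollary \ref{cor:forget_fin}, the inclusion $\Delta/G_b\to \Fin/G_b$ is coinitial, hence a weak equivalence on nerves; combined with the classical weak equivalence between the nerve of the category of simplices of a simplicial set $X$ and $X$ itself, the problem reduces to showing that the simplicial set $G_b|_{\Delta^{\op}}$ is weakly contractible.

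For this I would construct an explicit simplicial homotopy $H\colon G_b\times \Delta^{1}\to G_b$ from the identity to a constant map. Fix $k_{*}\in \langle n\rangle$ together with $O_{*}\in \B$ satisfying $O_b(\{k_{*}\})\subset O_{*}\subset \U(\iota_{f(k_{*})}^{*}\sigma)$; this yields a basepoint $*=(\phi_{*},O_{*})\in G_b(\langle 0\rangle)$ with $\phi_{*}(0)=k_{*}$. For $(\phi,O)\in G_b(\langle n\rangle)$ and weakly increasing $\chi\colon [n]\to [1]$ with threshold $j$ (so that $\chi(i)=0$ iff $i<j$), set $\phi^{\chi}(i)=\phi(i)$ for $i<j$ and $\phi^{\chi}(i)=k_{*}$ for $i\geq j$, and
\[
    O^{\chi}(A)=\begin{cases} O(A) & \text{if }A\subset\{0,\ldots,j-1\},\\ O_{*} & \text{if }A\subset\{j,\ldots,n\},\\ O_b(\phi^{\chi}(A)) & \text{if }A\text{ meets both,}\end{cases}
\]
and $H((\phi,O),\chi)=(\phi^{\chi},O^{\chi})$. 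The endpoint conditions $H(-,0)=\id$ (from $j=n+1$) and $H(-,1)=\text{const}_{*}$ (from $j=0$) are immediate.

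The main obstacle will be the bookkeeping to verify that $H$ is simplicially natural and that $(\phi^{\chi},O^{\chi})$ lies in $G_b(\langle n\rangle)$. For simplicial naturality under an order-preserving $\theta\colon [n']\to [n]$, letting $j'$ be the threshold of $\chi\circ\theta$, the key observation is that $\theta(i)\geq j$ iff $i\geq j'$, so the three cases in the definition of $O^{\chi}$ commute with pullback along $\theta$. The $G_b$-constraints on $O^{\chi}$ in each case follow from those on $O_b$, $O$, and $O_{*}$; the ``mixed'' choice $O_b(\phi^{\chi}(A))$ is the canonical minimal refinement compatible with the constraint.
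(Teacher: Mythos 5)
Your argument is correct and follows the paper's proof almost verbatim: the paper likewise identifies the slice over $([\sigma,f],O)$ with $\Fin/H$ for the same set-valued presheaf $H$ (your $G_b$), applies Corollary \ref{cor:forget_fin} to pass to $\Delta/H$, and then verifies contractibility of the underlying simplicial set directly. The only difference lies in that last step, where the paper shows $H$ is a trivial Kan complex (every map $\partial\Delta^m \to H$ extends uniquely over $\Fin(-,\langle n\rangle)$ and one sets $U([m])=O(\im g)$), while you instead write down an explicit simplicial contraction onto a basepoint; both verifications are sound.
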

\begin{proof}
We will show that, for $([\sigma,f],O) \in S(\tilde{K})^{\B}_n$ the slice category $(\Fin/S(K)^{\B})/([\sigma,f],O)$ is weakly contractible.
Let $H:\Fin^{\op} \to \Set$ denote the functor with $H_m$ consisting of arrows
\[\begin{tikzcd}
	{([\tau,g],U)} & {([\sigma,f],O)}
	\arrow["h", from=1-1, to=1-2]
\end{tikzcd}\]
in $S(\tilde{K})^{\B}$, with $h:\langle m \rangle \to \langle n \rangle$. The functoriality is given by precomposition. The slice category under consideration is then isomorphic to $\Fin/H$. By Corollary \ref{cor:forget_fin}, we have
\[
    |\Fin/H| \simeq |\Delta/H|.
\]
To show that $|\Delta/H|$ is contractible, we will show that the underlying simplicial object of $H$ is a trivial Kan complex.

For $m \geq 1$, a diagram $\partial \Delta^m \to H$ induces in particular, via composition, a diagram
\[\partial \Delta^m \to  H \to \Fin(-,\langle n \rangle),
\]
which admits a unique filling $g: \langle m \rangle \to \langle n \rangle$. The $m$-boundary $\partial \Delta^m \to H$ further contains the data of a map
\[
    U:\P([m])\backslash\{\emptyset,[m]\} \to \B^{\op}.
\]
We extend this functor by setting $U([n])=O(\im g)$. The $m$-simplex
\[\begin{tikzcd}
	{(g^*[\sigma,f],U)} & {[\sigma,f]}
	\arrow["g", from=1-1, to=1-2]
\end{tikzcd}\]
is then a well-defined filling of the given boundary.
\end{proof}

\begin{thm} Let $X$ be a topological space and let $\B$ be a basis of the
    topology. For any $\infty$-category $\C$ with limits, the restriction
    \[
        \Hyp(X,\C) \to \Hyp(\B,\C)
    \]
    is an equivalence of $\infty$-categories.
\end{thm}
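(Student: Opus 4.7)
The approach mirrors the sheaf case of Section \ref{sec:sheaves_basis_intersection}: I will show that restriction and right Kan extension along $\B^{\op} \hookrightarrow \O(X)^{\op}$ provide mutually inverse equivalences. Well-definedness of restriction is immediate, since every $\B$-valued hypercovering of $B \in \B$ is automatically an $\O(X)$-valued hypercovering of $B$, so the hyperdescent condition on $X$ directly yields the hyperdescent condition on $\B$.

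It therefore suffices to show that a presheaf $\F : \O(X)^{\op} \to \C$ is a hypersheaf on $X$ if and only if $\F$ is the right Kan extension of its restriction $\F_0 := \F|_{\B^{\op}}$ and $\F_0$ is a $\B$-hypersheaf. For the forward implication, I need, for each $U \in \O(X)$, the pointwise formula $\F(U) \simeq \lim_{V \in (\B/U)^{\op}} \F_0(V)$. I would prove this by constructing an $\O(X)$-valued hypercovering of $U$ from the open cover $\B/U$ (via the construction of Example \ref{ex:usual_covering}), invoking hyperdescent of $\F$ on $X$, and identifying the resulting limit with $\lim_{V \in (\B/U)^{\op}} \F_0(V)$ by passing through the $\B$-refinement and invoking the coinitiality results of Lemmas \ref{lem:symmetrization_coinit} and \ref{lem:sym_categorification_coinit}.

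The converse direction is the technical heart. Let $\F_0 \in \Hyp(\B, \C)$, set $\F := \Ran \F_0$, and fix an $\O(X)$-valued hypercovering $(K, \U)$ of some $U \in \O(X)$. Consider its $\B$-refinement $(K^{\B}, \U^{\B})$. Expanding each $\F(\U(\sigma))$ via the pointwise right Kan extension formula as $\lim_{(\B/\U(\sigma))^{\op}} \F_0$ and assembling via Fubini for limits and the Grothendieck construction rewrites $\lim_{\Delta/K} \F \circ \U$ as a limit over an index category which, by Lemmas \ref{lem:symmetrization_coinit} and \ref{lem:sym_categorification_coinit}, is coinitial to $\Delta/K^{\B}$ equipped with the diagram $\F_0 \circ \U^{\B}$. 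Since $(K^{\B}, \U^{\B})$ is itself a $\B$-valued hypercovering of $U$, a further application of the forward-direction analysis identifies this limit with $\lim_{V \in (\B/U)^{\op}} \F_0(V) = \F(U)$.

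The main obstacle will be the precise assembly in the converse direction: identifying the Grothendieck construction of $\sigma \mapsto (\B/\U(\sigma))^{\op}$ with $\Delta/K^{\B}$ up to coinitiality. As the Remark following Corollary \ref{cor:forget_fin} shows, a direct $\Delta$-indexed comparison is insufficient, which is precisely why the symmetrized and categorified intermediates $\Fin/S(K)^{\B}$ and $\Fin/S(\tilde{K})^{\B}$ from Section \ref{sec:Hyperdescent} must serve as a bridge. Some care is also needed to verify that $(K^{\B}, \U^{\B})$ forms a $\B$-valued hypercovering of the arbitrary open $U$ (not merely of the basic opens $\U(\sigma)$ it covers), so that the reduction back to the hyperdescent condition for $\F_0$ is applied correctly.
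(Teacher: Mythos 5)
Your overall strategy -- identify hypersheaves on $X$ with right Kan extensions of $\B$-hypersheaves, pass to the $\B$-refinement $(K^{\B},\U^{\B})$, and bridge between the discrete and categorified refinements via the symmetrization lemmas -- is the same as the paper's. But the final step of your converse direction has a genuine gap. You write that since $(K^{\B},\U^{\B})$ is a $\B$-valued hypercovering of $U$, ``a further application of the forward-direction analysis identifies this limit with $\lim_{V\in(\B/U)^{\op}}\F_0(V)$.'' The $\B$-hypersheaf condition, however, only constrains hypercoverings of \emph{basic} opens $B\in\B$; it says nothing about a hypercovering of an arbitrary open $U$, and $\F_0(U)$ is not even defined when $U\notin\B$. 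Nor can you substitute a coinitiality argument: for a general hypercovering, the functor $\tilde{\U}^{\B}:\Delta/\tilde{K}^{\B}\to(\B/U)^{\op}$ is typically \emph{not} coinitial (for a basic $B\subset U$ contained in no single $\U(\sigma)$, the relevant slice is empty). Your closing caveat about ``verifying that $(K^{\B},\U^{\B})$ is a hypercovering of $U$'' misdiagnoses the problem -- it is one, easily; the issue is that this fact is not usable.

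The paper closes this gap by interposing $(\B/U)^{\op}$: one shows that $\F|(\B/U)^{\op}$ is the right Kan extension of $\F\circ\tilde{\U}^{\B}$ along $\tilde{\U}^{\B}$, i.e.\ that for each basic $B\subset U$ one has $\F(B)\simeq\lim_{B/(\Delta/\tilde{K}^{\B})}\F\circ\tilde{\U}^{\B}$, where $B/(\Delta/\tilde{K}^{\B})$ is spanned by simplices landing inside $B$. This pointwise statement is where $\B$-hyperdescent legitimately enters, applied to the sub-hypercovering $K^{\B}_{\subset B}$ of the \emph{basic} open $B$; one then propagates the limit along $\Delta/K^{\B}_{\subset B}\to\Delta/\tilde{K}^{\B}_{\subset B}\to B/(\Delta/\tilde{K}^{\B})$ using variants of Lemmas \ref{lem:symmetrization_coinit} and \ref{lem:sym_categorification_coinit} together with a final-object argument, and finally uses $\F(U)\simeq\lim_{(\B/U)^{\op}}\F$ from the Kan extension hypothesis. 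Your proposal is missing this entire reduction to basic opens. A smaller remark on the forward direction: comparing $\lim_{\Delta/K}\F\circ\U$ (values at non-basic intersections) with $\lim_{\Delta/K^{\B}}\F_0\circ\U^{\B}$ would itself require the Kan extension property you are trying to establish; the paper sidesteps this by applying hyperdescent directly to the $\B$-refinement of the \emph{trivial} hypercovering $K=\Delta^0$ and checking coinitiality of $\U^{\B}:\Delta/K^{\B}\to U/\B^{\op}$ by an explicit trivial-Kan-complex argument, not via the symmetrization lemmas you cite there.
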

\begin{proof}
Let $\F: \O(X)^{\op} \to \C$ be a functor whose restriction $\F|\B^{\op}$ is a
$\B$-hypersheaf. It will suffice to show that $\F$ is a hypersheaf if and only
if it is a right Kan extension of $\F|\B^{\op}$.

First, assume that $\F$ is a hypersheaf. Let $U \subset X$ be open. We denote
by $(K,\U)$ the trivial hypercovering of $U$, i.e. $K=\Delta^0$ and
$\U(*)=U$. By the hypersheaf condition, restriction naturally identifies
$\F(U)$ with the limit of the diagram
\[
\begin{tikzcd}
\Delta/K^{\B} \arrow[r, "\U^{\B}"] &U/\B^{\op} \arrow[r, "\F"] & \C.
\end{tikzcd}
\]
We conclude the pointwise formula for a right Kan extension by showing that
\[
    \U^{\B}:\Delta/K^{\B} \to U/\B^{\op}
\]
is coinitial: By Quillen's Theorem A, we
must show that for an open set $B \in \B$ with $B \subset U$, the full
subcategory of $\Delta/K^{\B}$ spanned by the simplices $(\sigma,O)$ with $\U^{\B}(\sigma,O)
\supset B$ is weakly contractible. This category is naturally isomorphic to the
category of simplices of the simplicial subset
\[
    K_{B \subset}^{\B} \subset K^{\B}
\]
containing precisely those simplices. As any map
$\partial \Delta^n \to K_{B \subset}^{\B}$ admits a filling to an $n$-simplex
via the assignment $id_{\Delta^n} \mapsto B$, we find that $K_{B \subset}^{\B}$
is a trivial Kan complex, which proves the claim.

Conversely, suppose that $\F$ is a right Kan extension of the $\B$-hypersheaf
$\F|\B^{\op}$. For any hypercover $(K,\U)$ of an open subset $U \subset X$, we
must show that $\F \circ \U^{+}:\Delta^+/K \to \C$ is a limit diagram. To this
end, we first show that the diagram
\[
\begin{tikzcd}
\Delta/\tilde{K}^{\B} \arrow[rd, "\tilde{\U}^{\B}"] \arrow[d, "\Delta/\pi"'] &                                       &    \\
\Delta/K \arrow[r, "\U"']                                            & \O(X)^{\op} \arrow[r] \arrow[r, "\F"] & \C
\end{tikzcd}
\]
exhibits $\F \circ \U$ as the right Kan extension of $\F \circ \tilde{\U}^{\B}$
along $\Delta/\pi$. As $\Delta/\pi$ is a Cartesian fibration, this amounts to
showing that, for $\sigma \in K_n$, we have 
\begin{equation}\label{eq:cat_hy_computes_limit}
\F(\U(\sigma)) \simeq \lim_{(\Delta/\pi)^{-1}(\sigma)} \F \circ \U^{\B}.
\end{equation}
The restriction $\U^{\B}:(\Delta/\pi)^{-1}(\sigma) \to (\B/\U(\sigma))^{\op}$
is coinitial, since for $B \in (\B/\U(\sigma))^{\op}$, the slice category
$(\Delta/\pi)^{-1}(\sigma)/B$ has a final object given by
$(\sigma,O:\P([n])\backslash \{\emptyset\} \to \B^{\op}, A \mapsto B)$.

Thus, \eqref{eq:cat_hy_computes_limit} reduces to the equivalence
\begin{equation}
    \F(\U(\sigma)) \simeq \lim_{B \in (\B/\U(\sigma))^{\op}}\F(B),
\end{equation}
which holds since $\F$ is a right Kan extension of its restriction to $\B^{\op}$.

By the above, it now suffices to show that $\F \circ
(U^{\B})^+:\Delta^+/\tilde{K}^{\B} \to \C$ is a limit diagram. As $\F$ is a
right Kan extension of its restriction to $\B^{\op}$, we have
\begin{equation}
    \F(U) \simeq \lim_{B \in (\B/U)^{\op}}\F(B),
\end{equation}
so that it in fact suffices to show that the diagram
\[
    \begin{tikzcd}
        \Delta/\tilde{K}^{\B} \arrow[d, "\U^{\B}"'] \arrow[rd, "\F \circ \U^{\B}"] &    \\
        (\B/U)^{\op} \arrow[r, "\F"]                                               & \C
    \end{tikzcd}
\]
exhibits $\F$ as the right Kan extension of $\F \circ \U^{\B}$ along $\U^{\B}$.
For $B \in (\B/U)^{\op}$, let
\[
    B/(\Delta/\tilde{K}^{\B}) \subset
\Delta/\tilde{K}^{\B}
\]
denote the full subcategory spanned by simplices $(\sigma,O)$
such that $\U^{\B}(\sigma,O) \subset B$. The above then amounts to showing that
\begin{equation}\label{eq:refined_cover_limit}
    \F(B) \simeq \lim_{B/(\Delta/\tilde{K}^{\B})} \F \circ \U^{\B}.
\end{equation}
Let
\[
    K^{\B}_{\subset B} \subset K^{\B}
\]
denote the full simplicial subset containing tuples $(\sigma,O)$ such that $O(A) \subset B$ for all subsets $A$. Similarly, let
\[
    \tilde{K}^{\B}_{\subset B} \subset \tilde{K}^{\B}
\]
denote the full simplicial subcategory containing those simplices. We have a commutative diagram
\[
\begin{tikzcd}
    \Delta/K^{\B}_{\subset B} \arrow[d, "i"'] \arrow[rd]        &                              &    \\
    \Delta/\tilde{K}^{\B}_{\subset B} \arrow[d, "j"'] \arrow[r] & (\B/B)^{\op} \arrow[r, "\F"] & \C \\
    B/(\Delta/\tilde{K}^{\B}) \arrow[ru, "\tilde{\U}^{\B}"']                       &                              &   
\end{tikzcd}
\]
Note that $(K^{\B}_{\subset B},\U^{\B})$ is a $\B$-hypercovering of $B$. As
$\F$ is a $\B$-hypersheaf, this implies that
\[
    \F \circ
(\U^{\B})^+:\Delta^+/K^{\B}_{\subset B} \to \C
\]
is a limit diagram. We conclude
the proof of \eqref{eq:refined_cover_limit} by showing that this limit
propagates along the inclusions in the above diagram.

We first show that $\F \circ \tilde{\U}^{\B}$ is a right Kan extension of its
restriction along $j$, implying that the two functors have the same limit. For
$(\sigma,O) \in B/(\Delta/\tilde{K}^{\B})$ the inclusion of the full
subcategory $A \subset(\sigma,O)/(\Delta/\tilde{K}^{\B}_{\subset B})$ spanned
by the morphisms 
\begin{equation}
\begin{tikzcd}
(\sigma,O) \arrow[r, "id"] & (\sigma,O')
\end{tikzcd}
\end{equation}
is coinitial since the relevant overcategories have final objects. The
restriction
\[
    \tilde{\U}^{\B}: A \to (\B/\tilde{\U}^{\B}(\sigma,O))^{\op}
\]
is coinitial, since
for every $V \in (\B/\\tilde{U}^{\B}(\sigma,O))^{\op}$ the slice category has a final
object given by the constant map with value $V$. Therefore,
\[ \F(\tilde{\U}^{\B}(\sigma,O)) \simeq \lim_{ (\sigma,O)/(\Delta/\tilde{K}^{\B}_{\subset B}) } \F
\circ \tilde{\U}^{\B}
\]
and the restriction along $j$ is a Kan extension.

Finally, we show that $\F \circ (\U^{\B})^+|\Delta^+/\tilde{K}^{\B}_{\subset
B}$ is a limit diagram. Slight variations of Lemma
\ref{lem:symmetrization_coinit} and Lemma \ref{lem:sym_categorification_coinit}
yield a commutative diagram 
\[\begin{tikzcd}
	{\Delta/K^{\B}_{\subset B}} & {\Delta/\tilde{K}^{\B}_{\subset B}} \\
	{\Fin/S(K)^{\B}_{\subset B}} & {\Fin/S(\tilde{K})^{\B}_{\subset B}} & {\B^{\op}} & \C
	\arrow["i", from=1-1, to=1-2]
	\arrow[from=1-1, to=2-1]
	\arrow[from=1-2, to=2-2]
	\arrow[from=2-1, to=2-2]
	\arrow["{\tilde{\U}^{\B}}", from=2-2, to=2-3]
	\arrow["\F", from=2-3, to=2-4]
\end{tikzcd}\]
where the three morphisms other than $i$ in the square are coinitial. We conclude that
\[
    \F(B) \simeq \lim_{\Delta/K^{\B}_{\subset B}} \F \circ \U^{\B} \simeq \lim_{\Delta/\tilde{K}^{\B}_{\subset B}} \F \circ \tilde{\U}^{\B}.
    \]
\end{proof}

\newpage
\bibliographystyle{alpha} 
\bibliography{refs}

\end{document}